\documentclass[11pt,a4paper]{amsart}

\usepackage{amscd,amssymb,amsthm}
\usepackage{graphicx}

\vfuzz2pt 
\hfuzz2pt 

\numberwithin{equation}{section}
\usepackage{amssymb}
\usepackage{latexsym}
\usepackage{amsmath}
\usepackage{euscript}
\usepackage{graphics}
\usepackage[all]{xy}

\setlength{\textwidth}{16cm} \setlength{\textheight}{20.5cm}
\setlength{\oddsidemargin}{+0.2cm} \setlength{\topmargin}{0cm}

\newcommand\al\alpha
\newcommand\be\beta
\newcommand\ga\gamma
\newcommand\de\delta
\newcommand\tha\theta
\newcommand\la\lambda
\newcommand\si\sigma
\newcommand\om\omega

\newcommand\iy\infty

\numberwithin{equation}{section}


\theoremstyle{plain}
\newtheorem{theorem}{Theorem}[section]

\newtheorem{lemma}[theorem]{Lemma}
\newtheorem{proposition}[theorem]{Proposition}

\theoremstyle{definition}
\newtheorem{definition}{Definition}[section]

\theoremstyle{remark}
\newtheorem{Remark}{Remark}

\numberwithin{equation}{section}

\numberwithin{table}{section}

\numberwithin{figure}{section}

\setlength{\paperwidth}{210mm} \setlength{\paperheight}{297mm}
\setlength{\oddsidemargin}{0mm} \setlength{\evensidemargin}{0mm}
\setlength{\topmargin}{-20mm} \setlength{\headheight}{10mm}
\setlength{\headsep}{13mm} \setlength{\textwidth}{160mm}
\setlength{\textheight}{240mm} \setlength{\footskip}{15mm}
\setlength{\marginparwidth}{0mm} \setlength{\marginparsep}{0mm}

\setlength{\paperwidth}{210mm} \setlength{\paperheight}{297mm}
\setlength{\oddsidemargin}{0mm} \setlength{\evensidemargin}{0mm}
\setlength{\topmargin}{-20mm} \setlength{\headheight}{10mm}
\setlength{\headsep}{13mm} \setlength{\textwidth}{160mm}
\setlength{\textheight}{240mm} \setlength{\footskip}{15mm}
\setlength{\marginparwidth}{0mm} \setlength{\marginparsep}{0mm}

\newcommand{\bea}{\begin{eqnarray*}}
\newcommand{\eea}{\end{eqnarray*}}
\newcommand{\bean}{\begin{eqnarray}}
\newcommand{\eean}{\end{eqnarray}}

\newcommand\dstyle\displaystyle

\newcommand\bma{\begin{pmatrix}}
\newcommand\ema{\end{pmatrix}}
\begin{document}

\title{Duhamel convolution product in the setting of Quantum calculus}
\author{F. Bouzeffour and M. T. Garayev}
\address{Department of mathematics, College of Sciences, King Saud University,
 P. O Box 2455 Riyadh 11451, Saudi Arabia.}

 \email{fbouzaffour@ksu.edu.sa; mgarayev@ksu.edu.sa}
 \thanks{ {\em  2010 Mathematics Subject Classification.}.Primary 33D45, secondary 96J15.
 \hfil\break\indent
 {\em  Key words and phrases.} Duhamel product, $q$-difference operator, $q$-integral $q$-special functions, $q$-Duhamel product.}
\maketitle

\begin{abstract}
In this paper we introduce the notions of $q$-Duhamel product and $q$-integration operator. We prove that the classical Wiener algebra $W(\mathbb{D})$ of all analytic functions on the unit disc $\mathbb{D}$ of the complex plane $\mathbb{C}$ with absolutely convergent Taylor series is a Banach algebra with respect to $q$-Duhamel product. We also describe the cyclic vectors of the $q$-integration operator on $W(\mathbb{D})$ and characterize its commutant in terms of the $q$-Duhamel product operators.
\end{abstract}

\section{Introduction}
From the seventies, the interest on the $q$-deformation theory and the so-called quantum calculus have witnessed a great development, due to their role in many areas such as physics and quantum groups. Taking account of the work of Jackson \cite{Jac1,Jac2} and many authors such as Askey, Gasper \cite{GR}, Ismail \cite{ism3}, Koornwinder \cite{Koornwinder} have recently developed this topic.\\For
instance $q$-convolution structure  and $q$-operational calculus in some functional spaces are one of this interest. However, in literature few papers studied these subject \cite{Al, isma2, bou}.\\
\indent The present article is devoted to the study of the $q$-analogue of
the Duhamel convolution which see Wigley \cite{Wigley, Wig1} is defined as the derivative of the classical Mikusinski convolution product:\begin{equation}f\star
g(x)=\frac{d}{dx}\int_0^xf(x-t)g(t)dt.
\end{equation}
where $f,\,g$ are functions in suitable classes of functions on the segment of the real axis.\\ This convolution plays an important role in operator calculus of Mikusinski \cite{Mins}. Dimovski \cite{Dimo} and Bojinov \cite{Bojin} had good achievements in applications of Duhamel convolution product in many questions of analysis including the theory of multipliers of some classical algebras of functions. In the last decay, the Duhamel product has been extensively explored on various spaces of functions, including $L^p(0,1)$, $C^\infty(0,1)$ $W^{(n)}(0,1)$, $W(\mathbb{D})$ by Karaev and his collaborators see \cite{gara}. In this work we will introduce the $q$-Duhamel product and $q$-integration operator, and we prove that the Wiener algebra $W(\mathbb{D})$ of analytic functions is also a Banach algebra under this new $q$-Duhamel product. We also study the cyclic vectors and commutant of $q$-integration operator.
\section{Preliminaries}
We assume that $z\in{\bf C}$ and $0<q<1$, unless otherwise is
specified. We recall some notations \cite{GR}. For an arbitrary
complex number $a$
$$
(a,q)_n :=\left\{
\begin{array}{lcl}
1&{\rm for}&n=0\\
(1-a)(1-aq)\ldots(1-aq^{n-1})&{\rm for}&n\ge1,\\
\end{array}
\right.
$$
$$
(a,q)_\infty :=\lim_{n\to\infty}(a,q)_n,
$$
and
$$
\left[\begin{array}{c} n \\k
\end{array}
\right ]_{q} :=\frac {[n]_q!} {[k]_q![n-k]_q!},
$$
where $$[n]_q=\frac{(q;q)_n}{(1-q)^n}.$$
Let $W(\mathbb{D})$
denote Wiener disc-algebra of all functions $f(z)=\sum_{n=0}^\infty a_nz^n$, satisfying \begin{equation}
\|f\|_w=\sum_{n=0}^\infty
|a_n|<\infty.\end{equation}
It is well known that $W(\mathbb{D})$ is a Banach algebra with
respect to the pointwise multiplication of functions (i.e., with respect to the usual
Cauchy product (convolution product) of formally power series).\\ \indent The Jackson $q$-integral of a function $f(z)\in W(\mathbb{D})$ on the interval $[0,\xi]$ ($\xi\in \mathbb{D}$) is defined as follows \cite{Jac2}:
\begin{align}
\int_0^z f(x)\,d_qx:=z(1-q)\sum_{n=0}^\infty f(zq^n)q^n.\label{integ}
\end{align}
Also we need the $q$-integration by parts formula:\\If $f$, $g\in W(\mathbb{D})$ are $$ \int_{0}^z(D_qf)(t)\,g(t)\,d_qt=
f(z)\,g(q^{-1}z)-f(0)\,g(0)
-\int_{0}^z f(t)\,(D_q^+g)(t)\,d_qt,
$$
where the {\sl backward\/} and {\sl forward $q$-derivatives} are defined by
$$
(D_qf)(z):={f(z)-f(qz)\over(1-q)z},\quad
(D_q^+f)(z):={f(q^{-1}z)-f(z)\over(1-q)z}.$$
Consider the $q$-exponentials, see \cite{GR} defined by
\begin{equation*}\label{2.1}
e_{q}(z) :=\sum_{n=0}^\infty\frac{z^n}{(q,q)_n}=
\frac1{(z,q)_\infty},\qquad|z|<1
\end{equation*}
and
\begin{equation}
E_{q}(z) :=\sum_{n=0}^\infty\frac{q^{n\choose2}z^n}{(q,q)_n}=
(-z,q)_\infty.\label{2.2}
\end{equation}
The $q$-exponentials functions $e_{q}(z)$ and $E_{q}(z)$ satisfy
$$ D_qe_{q}((1-q)z)=e_{q}((1-q)z)\,\,\,\mbox{and}\,\,\,D_{q}E_{q}((1-q)z)=E_{q}(q(1-q)z).$$

\section{Wiener Banach algebra}
In this section, we define the $q$-translation operator and  $q$-Duhamel product related to $q$-difference operator $D_q,$ and we show that $W(\mathbb{D})$ is a Banach algebra with multiplication as $q$-Duhamel product.
\subsection{$q$-Translation}
Let $\xi \in \mathbb{C}$, the $q$-translation operator  $\tau_q^\xi$ is defined on monomials $z^n$ by \cite{ism3}
\begin{equation}
\tau_q^\xi1=1,\quad\tau_q^\xi z^n:=(z+q\xi)\dots(z+q^n\xi),\quad n=1,\,2,\dots\,\,.
\end{equation}
It is clearl that
\begin{align}
\tau_q^{q^r\xi} (q^sz)^n=q^k\tau_q^{ q^{s-r}\xi} z^n=q^s\tau_q^{\xi} (q^{r-s}z)^n
\end{align}
and \begin{equation}
\tau_q^{-z/q^{r}} z^n=0=\tau_q^{\xi} (-q^rz)^n,\quad r=1,\,\dots, n.
\end{equation}
Note that \begin{align}
\lim_{q\rightarrow 1}\tau_q^\xi z^n&=\lim_{q\rightarrow 1}\prod_{k=0}^{n-1}(z+\xi q^{k+1})=(z+\xi)^n.
\end{align}

\begin{proposition} Let $\xi \in \mathbb{C} $. The $q$-translation operator $\tau_q^\xi $ can be extended to the function $f(z)=\sum_{n=0}^{\infty}a_nz^n\in W(\mathbb{D})$ as follows
\begin{equation}
\tau_q^\xi f(z)=\sum_{n=0}^{\infty}a_n (z+q\xi)\dots(z+q^n\xi)
.\label{translation2}
\end{equation}
Moreover, the function $\tau_q^\xi f(z)$ as a function of $z$ is in $W(\mathbb{D})$ and entire function in the variable $\xi,$ and we have
\begin{align}&\|\tau_q^\xi f\|_w\leq (-|\xi|,q)_\infty\|f\|_w .
\end{align}
\end{proposition}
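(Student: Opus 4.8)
The plan is to expand each translated monomial via the $q$-binomial theorem, identify the Taylor coefficients in $z$, and then estimate the Wiener norm term by term.

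First I would record, applying the finite $q$-binomial theorem $\prod_{j=0}^{n-1}(z+q^j y)=\sum_{k=0}^n q^{\binom{k}{2}}\qbinom{n}{k}{q}z^{n-k}y^k$ with $y=q\xi$, the identity
\begin{equation*}
\tau_q^\xi z^n=\prod_{k=1}^n\bigl(z+q^k\xi\bigr)=\sum_{k=0}^n q^{\binom{k+1}{2}}\qbinom{n}{k}{q}\,\xi^k\,z^{n-k},\qquad n\ge 0,
\end{equation*}
where the shift $\binom{k}{2}\mapsto\binom{k}{2}+k=\binom{k+1}{2}$ is exactly the price of the product starting at $k=1$ rather than $k=0$. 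Specializing $z=1$ and $\xi=|\xi|$ gives
\begin{equation*}
\bigl\|\tau_q^\xi z^n\bigr\|_w=\sum_{k=0}^n q^{\binom{k+1}{2}}\qbinom{n}{k}{q}\,|\xi|^k=\prod_{k=1}^n\bigl(1+q^k|\xi|\bigr)\le\prod_{k=0}^\infty\bigl(1+q^k|\xi|\bigr)=(-|\xi|,q)_\infty ,
\end{equation*}
the infinite product being convergent because $0<q<1$.

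Next, since every term is nonnegative,
\begin{equation*}
\sum_{n=0}^\infty|a_n|\,\bigl\|\tau_q^\xi z^n\bigr\|_w\le(-|\xi|,q)_\infty\sum_{n=0}^\infty|a_n|=(-|\xi|,q)_\infty\,\|f\|_w<\infty .
\end{equation*}
Hence the series $\sum_n a_n\tau_q^\xi z^n$ converges absolutely in the Banach space $W(\mathbb{D})$; its sum is $\tau_q^\xi f$, which therefore lies in $W(\mathbb{D})$ and obeys the asserted bound. Re-indexing the double sum by $m=n-k$ (legitimate by the same absolute convergence) shows that the $z^m$-coefficient of $\tau_q^\xi f$ equals $\sum_{k\ge 0}a_{m+k}\,q^{\binom{k+1}{2}}\qbinom{m+k}{k}{q}\,\xi^k$, so one also reads off directly that this $\tau_q^\xi f$ genuinely extends $\tau_q^\xi$ from monomials.

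Finally, for entirety in $\xi$ I would note that the partial sums $S_N(\xi)=\sum_{n=0}^N a_n\tau_q^\xi z^n$ are $W(\mathbb{D})$-valued polynomials in $\xi$, hence $W(\mathbb{D})$-valued entire functions, and that for $|\xi|\le R$
\begin{equation*}
\bigl\|\tau_q^\xi f-S_N(\xi)\bigr\|_w\le(-R,q)_\infty\sum_{n>N}|a_n|\longrightarrow 0\quad(N\to\infty),
\end{equation*}
uniformly on $\{|\xi|\le R\}$ since $(-R,q)_\infty<\infty$. Thus $S_N\to\tau_q^\xi f$ locally uniformly on $\mathbb{C}$, so $\xi\mapsto\tau_q^\xi f$ is entire as a $W(\mathbb{D})$-valued map; composing with evaluation at a fixed $z$ gives that $\tau_q^\xi f(z)$ is entire in $\xi$. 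The only genuinely delicate point is the bookkeeping in the $q$-binomial expansion together with the interchange of the two summations, but both become routine once the nonnegativity estimate for $\|\tau_q^\xi z^n\|_w$ is in hand; I expect that estimate to be the crux of the argument.
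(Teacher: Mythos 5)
Your proof is correct, and it takes a genuinely different route from the paper's. The paper never expands the product: it bounds $|(z+q\xi)\cdots(z+q^n\xi)|\le\prod_{k=1}^n(|z|+|\xi|q^k)$, splits off the first $N$ factors, and chooses $N$ so that $\rho+q^Nr<1$ on given compacta $|z|\le\rho$, $|\xi|\le r$, obtaining a geometric bound $M(\rho+q^Nr)^n$ and hence locally uniform convergence (so analyticity in $z$ and entirety in $\xi$). You instead expand each monomial by the finite $q$-binomial (Rothe) formula, observe that all coefficients are nonnegative so that $\|\tau_q^\xi z^n\|_w=\prod_{k=1}^n(1+q^k|\xi|)\le(-|\xi|,q)_\infty$ exactly, and sum. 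Your approach buys more: it directly yields membership in $W(\mathbb{D})$ together with the stated norm inequality $\|\tau_q^\xi f\|_w\le(-|\xi|,q)_\infty\|f\|_w$, which the paper's compact-subset estimate does not obviously deliver (the paper's bound only controls the function on compacta of the open disc, not its Wiener norm), and it identifies the Taylor coefficients explicitly. What the paper's argument buys is slightly weaker hypotheses in spirit --- it only needs crude absolute-value estimates on the product and no combinatorial identity --- but for this particular proposition your computation is the cleaner and more complete one, and your locally uniform convergence argument for entirety in $\xi$ is essentially the same as what the paper's estimate is driving at.
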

\begin{proof}
Observe that for all $1\leq N< n,$ we can write
\begin{align*}
|(z+q\xi)\dots(z+q^n\xi)|&\leq\prod_{k=1}^n(|z|+|\xi| q^k)\\&
\leq\prod_{k=1}^{N}(|z|+|\xi| q^k)\prod_{k=N+1}^n(|z|+|\xi| q^k)
\\&
\leq(|z|+|\xi| )^N\prod_{k=1}^{n-N}(|z|+|\xi| q^{k+N})
\\&
\leq\big(\frac{|z|+|\xi|}{|z|+|\xi|q^N} \big)^N(|z|+q^N|\xi|)^n,\,\,\,(z,\xi)\neq(0,0).
\end{align*}
Now let $K$ be a compact subset of the unit disk and $R$ a compact subset of the complex plane. There exist a real numbers $0<\rho<1$ and $r>0$ such that for $z\in K$ and $\xi \in R$, we have $|z|<\rho<1$ and $|\xi|<r.$ In addition,  there exists an integer $N$ such that
\begin{equation*}
\rho+q^Nr<1.
\end{equation*}
Then \begin{equation}|(z+q\xi)\dots(z+q^n\xi)|\leq M(\rho+q^Nr)^n,
\end{equation}
where $$M=\max_{z \in K,\,\xi\in R}\big(\frac{|z|+|\xi|}{|z|+|\xi|q^N} \big).$$
This shows the result.
\end{proof}
\begin{lemma}We have
\begin{equation}
\tau_q^\xi f(z) =(-(1-q)\xi D_{q,z};q)_\infty f(z),\label{translation}
\end{equation}
and
\begin{equation}
\tau_q^{\xi}D_{q,z} f(z)=D^+_{q,\xi}\tau_q^{\xi} f(z).\label{lam3}
\end{equation}
\end{lemma}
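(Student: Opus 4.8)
The plan is to establish both identities first on the monomials $z^n$ and then extend them to all of $W(\mathbb{D})$ by linearity and continuity. First I would record the key preliminary fact that $D_{q,z}$ is a bounded operator on $W(\mathbb{D})$: since $D_{q,z}z^n=[n]_qz^{n-1}$ with $0\le[n]_q=(1-q^n)/(1-q)\le(1-q)^{-1}$, one has $\|D_{q,z}f\|_w\le(1-q)^{-1}\|f\|_w$, so in the product
\[
(-(1-q)\xi D_{q,z};q)_\infty=\prod_{k=0}^{\infty}\bigl(1+(1-q)q^k\xi D_{q,z}\bigr)
\]
the $k$-th factor has operator norm at most $1+q^k|\xi|$; hence the partial products form a Cauchy sequence in operator norm and the limit is a bounded operator of norm at most $(-|\xi|;q)_\infty$. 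Since $\tau_q^\xi$ is bounded by the preceding Proposition and the polynomials are $\|\cdot\|_w$-dense in $W(\mathbb{D})$, it will suffice to verify \eqref{translation} and \eqref{lam3} on each $z^n$.

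For \eqref{translation} I would iterate $D_{q,z}z^n=[n]_qz^{n-1}$ to get $D^m_{q,z}z^n=\dfrac{[n]_q!}{[n-m]_q!}z^{n-m}$ for $0\le m\le n$ and $D^m_{q,z}z^n=0$ for $m>n$, and then expand the operator by Euler's identity $(a;q)_\infty=\sum_{m\ge0}q^{\binom{m}{2}}(-a)^m/(q;q)_m$ with $a=-(1-q)\xi D_{q,z}$, using $(q;q)_m=(1-q)^m[m]_q!$. This gives
\[
(-(1-q)\xi D_{q,z};q)_\infty z^n=\sum_{m=0}^{n}\frac{q^{\binom{m}{2}}\xi^m}{[m]_q!}\,D^m_{q,z}z^n=\sum_{m=0}^{n}q^{\binom{m}{2}}\qbinom{n}{m}{q}\,\xi^m z^{n-m}.
\]
Collecting the powers of $\xi$, the terminating $q$-binomial theorem identifies this polynomial with the product appearing in \eqref{translation2}, i.e.\ with $\tau_q^\xi z^n$; this proves \eqref{translation} on monomials and, by the density argument, on all of $W(\mathbb{D})$.

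For \eqref{lam3} I would again reduce to monomials, noting that $D^+_{q,\xi}\tau_q^\xi f=\bigl(\tau_q^{q^{-1}\xi}f-\tau_q^\xi f\bigr)/\bigl((1-q)\xi\bigr)$ and that both $\tau_q^{q^{-1}\xi}$ and $\tau_q^\xi$ are bounded on $W(\mathbb{D})$ by the Proposition, so the identity may be tested term by term. On a monomial, the defining product yields the two factorizations
\[
\tau_q^\xi z^n=(z+q^n\xi)\,\tau_q^\xi z^{n-1},\qquad \tau_q^{q^{-1}\xi}z^n=(z+\xi)\,\tau_q^\xi z^{n-1},
\]
whence
\[
D^+_{q,\xi}\bigl(\tau_q^\xi z^n\bigr)=\frac{\tau_q^{q^{-1}\xi}z^n-\tau_q^\xi z^n}{(1-q)\xi}=\frac{(z+\xi)-(z+q^n\xi)}{(1-q)\xi}\,\tau_q^\xi z^{n-1}=[n]_q\,\tau_q^\xi z^{n-1},
\]
which coincides with $\tau_q^\xi\bigl(D_{q,z}z^n\bigr)=[n]_q\,\tau_q^\xi z^{n-1}$.

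The step I expect to require the most care is the two extension arguments from monomials to general $f$: for \eqref{translation} this is the operator-norm convergence of the infinite product displayed above, and for \eqref{lam3} it is the justification, via the norm estimate in the Proposition, that $D^+_{q,\xi}$ may be applied term by term to $\tau_q^\xi f=\sum_n a_n\tau_q^\xi z^n$. The combinatorial part, namely tracking the powers of $q$ in the terminating $q$-binomial identity, is then routine; in fact \eqref{lam3} can alternatively be obtained from \eqref{translation} by applying $D^+_{q,\xi}$ to it and using the functional equation $(a;q)_\infty=(1-a)(aq;q)_\infty$ of the operator $q$-exponential together with the commutation of $\tau_q^\xi$ with $D_{q,z}$.
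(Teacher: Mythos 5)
Your treatment of \eqref{lam3} is correct and in fact cleaner than the paper's: you verify it directly on monomials via the two factorizations $\tau_q^\xi z^n=(z+q^n\xi)\,\tau_q^\xi z^{n-1}$ and $\tau_q^{q^{-1}\xi}z^n=(z+\xi)\,\tau_q^\xi z^{n-1}$, whereas the paper derives \eqref{lam3} from the series form of \eqref{translation} by recognizing a $D_{q,\xi}$ acting on $\tau_q^{\xi/q}f$. Your insistence on operator-norm convergence of the infinite product $\prod_{k\ge0}\bigl(1+(1-q)q^k\xi D_{q,z}\bigr)$, using $\|D_{q,z}\|\le(1-q)^{-1}$ on $W(\mathbb{D})$, is also a genuine improvement: the paper never justifies that the operator $q$-exponential is well defined.

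However, there is a concrete gap in your proof of \eqref{translation}, at the step ``the terminating $q$-binomial theorem identifies this polynomial with the product appearing in \eqref{translation2}.'' It does not. Your expansion correctly gives
\begin{equation*}
(-(1-q)\xi D_{q,z};q)_\infty z^n=\sum_{m=0}^{n}q^{\binom{m}{2}}\left[\begin{array}{c} n \\ m \end{array}\right]_{q}\xi^m z^{n-m}=z^n(-\xi/z;q)_n=(z+\xi)(z+q\xi)\cdots(z+q^{n-1}\xi),
\end{equation*}
whereas the definition \eqref{translation2} has $\tau_q^\xi z^n=(z+q\xi)\cdots(z+q^{n}\xi)=z^n(-q\xi/z;q)_n$; the two products differ by a shift $\xi\mapsto q\xi$ in every factor. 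The correct operator identity compatible with \eqref{translation2} is $\tau_q^\xi=(-(1-q)\,q\,\xi D_{q,z};q)_\infty$, with the extra factor $q$ producing the exponents $q^{\binom{m+1}{2}}$ rather than $q^{\binom{m}{2}}$. This is in fact what the paper's own intermediate computation yields before it silently drops the $q$ in the statement of the lemma, so the flaw originates in the paper; but a blind proof that actually carries out the claimed identification must detect the mismatch rather than assert it. You should either prove the corrected identity with the factor $q$ inserted, or flag that \eqref{translation} as stated is inconsistent with \eqref{translation2}. Note also that the corrected form is the one needed for the paper's derivation of \eqref{lam3}, though your independent monomial computation of \eqref{lam3} is unaffected by this issue.
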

\begin{proof}From the well-known identity \cite{GR},
\begin{equation}
(a;q)_n=\sum_{k=0}^{n}(-1)^k\left[\begin{array}{c} n
\\k
\end{array}
\right ]_{q}  q^{\binom{k}{2}}a^{k},
\end{equation}
and $\tau_q^\xi z^n$ has the following series expansion
\begin{align*}
\tau_q^\xi z^n&=z^n(-q\xi/z;q)_n
\\&=\sum_{k=0}^{n}\left[\begin{array}{c} n
\\k
\end{array}
\right ]_{q}  q^{\binom{k+1}{2}}\xi^{k}z^{n-k}
\\&=\big(\sum_{k=0}^n\frac{q^{\binom{k+1}{2}}}{[k]_q!}\xi^kD^k_{q,z}q^k\big) z^n\\&= (-(1-q)q\xi D_{q,z};q)_\infty z^n.
\end{align*}
Then \begin{equation}
\tau_q^\xi f(z) =(-(1-q)\xi D_{q,z};q)_\infty f(z).\label{is}
\end{equation}
Now, form \eqref{is} we have

\begin{equation*}
\tau_q^{\xi} D_{q,z}f(z) =\sum_{n=0}^{\infty}\frac{(1-q)^n}{(q;q)_n}q^{\binom{n+1}{2}}\xi^n
D_{q,z}^{n+1}f(z) .
\end{equation*}
Using the relation
$$D_{q}\xi^{n+1}=\frac{1-q^{n+1}}{1-q}\xi^n,$$
we deduce that
\begin{align*}
\tau_q^{\xi} D_{q,z}f(z) &=D_{q,\xi}\big(\sum_{n=0}^{\infty}\frac{(1-q)^{n+1}}{(q;q)_{n+1}}q^{\binom{n+1}{2}}\xi^{n+1}
D_{q,z}^{n+1}f(z)\big)\\&=D_{q,\xi}\tau_q^{\xi/q} f(z)\\&=
D^+_{q,\xi}\tau_q^{\xi} f(z).
\end{align*}
\end{proof}

\subsection{$q$-Duhamel product}
We define the $q$-Duhamel product by
\begin{align}(f\star_q g)(z)=D_q\big(\int_0^z (\tau_q^{-t}f)(z)g(t)\,d_qt\big).\label{Duh}
\end{align}
\begin{lemma}We have\\
\begin{align}(f\star_q g)(z)&=\int_0^z (\tau_q^{-t}D_qf)(z)g(t)\,d_qt+ f(0)g(z)\label{ra1}\\&=\int_0^z (\tau_q^{-t}f)(z)D_qg(t)\,d_qt+ f(z)g(0)\label{ra2}.
\end{align}
\end{lemma}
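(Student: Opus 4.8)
The plan is to reduce everything to a $q$-Leibniz rule for differentiating a Jackson integral whose integrand itself depends on the upper limit. For a two-variable function $F(z,t)$ for which the Jackson sums below converge absolutely (which will hold here because of the bound $\|\tau_q^\xi f\|_w\le(-|\xi|;q)_\infty\|f\|_w$ from the Proposition, applied with $|t|\le|z|<1$), I claim
\[
D_q\Big(\int_0^z F(z,t)\,d_qt\Big)=F(qz,z)+\int_0^z (D_{q,z}F)(z,t)\,d_qt ,
\]
where $D_{q,z}$ differentiates in the first argument only. This is proved by a direct computation from the definition \eqref{integ}: writing $H(z)=\int_0^zF(z,t)\,d_qt=(1-q)z\sum_{k\ge0}F(z,q^kz)q^k$ and $D_qH(z)=\frac{H(z)-H(qz)}{(1-q)z}$, one re-indexes the sum coming from $H(qz)$; the $k=0$ term of the series for $H(z)$ is left over as $F(z,z)$, and the remaining difference of series is recognised as $\int_0^z (D_{q,z}F)(z,t)\,d_qt$ minus its own $k=0$ term $(1-q)z(D_{q,z}F)(z,z)=F(z,z)-F(qz,z)$; the two occurrences of $F(z,z)$ cancel and $F(qz,z)$ survives. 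Taking $F(z,t)=h(t)$ gives back the $q$-fundamental theorem $D_q\int_0^z h(t)\,d_qt=h(z)$, a useful consistency check.

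Next I would deduce \eqref{ra1} by applying this rule to $F(z,t)=(\tau_q^{-t}f)(z)\,g(t)$. Two observations finish it. First, $D_{q,z}$ commutes with $\tau_q^{-t}$ when $t$ is held fixed: by \eqref{translation}, $\tau_q^{-t}=\big((1-q)tD_{q,z};q\big)_\infty$ is a (formal) power series in $D_{q,z}$ whose coefficients do not involve $z$, and any such operator commutes with $D_{q,z}$; hence $(D_{q,z}F)(z,t)=(\tau_q^{-t}D_qf)(z)\,g(t)$. Second, the boundary term collapses: since $\tau_q^{-t}w^n=(w-qt)(w-q^2t)\cdots(w-q^nt)$ has its first factor vanish at $w=qt$, we get $(\tau_q^{-z}f)(qz)=f(0)$, so $F(qz,z)=f(0)g(z)$. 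Substituting into the Leibniz formula gives exactly $(f\star_qg)(z)=f(0)g(z)+\int_0^z(\tau_q^{-t}D_qf)(z)g(t)\,d_qt$, which is \eqref{ra1}.

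Formula \eqref{ra2} would then be obtained from \eqref{ra1} by $q$-integration by parts, with $z$ treated as a fixed parameter. Set $u(t):=(\tau_q^{-t}f)(z)$; then $u(0)=(\tau_q^0f)(z)=f(z)$, while $u(q^{-1}z)=f(0)$ (again the first factor of $\tau_q^{-q^{-1}z}w^n$ vanishes at $w=z$), and, by \eqref{lam3} taken at $\xi=-t$, one has $(D^+_{q,t}u)(t)=-(\tau_q^{-t}D_qf)(z)$. Applying the $q$-integration by parts formula of Section 2 to $\int_0^z(D_qg)(t)\,u(t)\,d_qt$ yields
\[
\int_0^z(\tau_q^{-t}f)(z)\,(D_qg)(t)\,d_qt=f(0)g(z)-f(z)g(0)+\int_0^z(\tau_q^{-t}D_qf)(z)\,g(t)\,d_qt ,
\]
and adding $f(z)g(0)$ to both sides identifies the right-hand side with the right-hand side of \eqref{ra1}, i.e.\ with $(f\star_qg)(z)$; this is \eqref{ra2}.

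I expect the $q$-Leibniz step to be the real obstacle: because both the upper limit and the integrand carry $z$, the ordinary $q$-fundamental theorem does not apply, and the re-indexing of the shifted Jackson sum has to be done carefully so that what is left over is precisely the diagonal term that combines to give the boundary value $F(qz,z)$ rather than $F(z,z)$. Once that identity, together with the commutation $D_{q,z}\tau_q^{-t}=\tau_q^{-t}D_{q,z}$ and the vanishing $(\tau_q^{-z}f)(qz)=(\tau_q^{-q^{-1}z}f)(z)=f(0)$, is in place, the remaining manipulations are routine; and all interchanges of summation are legitimate on $W(\mathbb{D})$ thanks to the norm estimate above, so it would in fact suffice to verify each identity for monomials $f(z)=z^n$, $g(z)=z^m$ and extend by linearity and continuity.
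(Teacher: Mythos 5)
Your proof is correct and follows essentially the same route as the paper: the same $q$-Leibniz rule $D_q\int_0^zF(z,t)\,d_qt=F(qz,z)+\int_0^z (D_{q,z}F)(z,t)\,d_qt$ together with the boundary collapse $(\tau_q^{-z}f)(qz)=f(0)$ for \eqref{ra1}, and then $q$-integration by parts via \eqref{lam3} for \eqref{ra2}. The only differences are that you actually prove the Leibniz identity by re-indexing the Jackson sums (the paper merely asserts it) and you keep the boundary terms straight where the paper's displayed intermediate line contains a typo.
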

\begin{proof}Observe that if $F(x,t)$ is a function of two variables, then
\begin{equation}
D_{q,x}\big(\int_0^xF(x,t)\,d_qt\big)=\int_0^xD_{q,x}F(x,t)\,d_qt+F(qx,x)
\end{equation}
Hence,
\begin{equation*}
D_{q,z}\big(\int_0^z (\tau_q^{-t}f)(z)g(t)\,d_qt\big)=\int_0^z (\tau_q^{-t}D_qf)(z)g(t)\,d_qt+ \tau_q^{-z}f(qz)g(x).
\end{equation*}
The result follows from the fact that
$$\tau_q^{-z}f(qz)=f(0).$$
Now to prove \eqref{ra2}, we use formulas \eqref{ra1} and \eqref{lam3} and the q-integration by parts formula and we have
\begin{align*}(f\star_q g)(z)&=-\int_0^zD^+_{q,t} (\tau_q^{-t}f)(z)g(t)\,d_qt+ f(0)g(z)\\&=-f(0)g(z)+f(z)g(0)+\int_0^z (\tau_q^{-t}f)(z)D_qg(t)\,d_qt+ f(z)g(0)\\&=\int_0^z (\tau_q^{-t}f)(z)D_qg(t)\,d_qt+ f(z)g(0).
\end{align*}
\end{proof}
\begin{lemma}We have
\begin{equation}
z^n\star_q z^m=\frac{[n]_q![m]_q!}{[n+m]_q!}\,z^{n+m},\label{con}
\end{equation}
where $$[0]_q=1,\quad [n]_q!=[n]_q\dots[1_q],\,\,n=1,\,2\,\dots\,\,.$$
Clearly \eqref{con} shows
that the $q$-Duhamel convolution is commutative, associative and has $1$ as unit.
\end{lemma}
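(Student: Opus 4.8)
The plan is to reduce \eqref{con} to a one-step recursion in the exponent $n$. The guiding observation is that $z^n\star_q z^m$ is automatically a scalar multiple of $z^{n+m}$: indeed $\tau_q^{-t}z^n=(z-qt)(z-q^2t)\cdots(z-q^nt)$ is homogeneous of degree $n$ in the pair $(z,t)$, so $(\tau_q^{-t}z^n)(z)\,t^m$ is a sum of monomials $z^{n-k}t^{m+k}$; $q$-integrating in $t$ by means of $\int_0^z t^p\,d_qt=z^{p+1}/[p+1]_q$ (immediate from \eqref{integ}) turns this into a scalar multiple of $z^{n+m+1}$, and applying $D_q$ into a scalar multiple of $z^{n+m}$. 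Write $z^n\star_q z^m=c_{n,m}\,z^{n+m}$ and $\int_0^z(\tau_q^{-t}z^n)(z)\,t^m\,d_qt=\gamma_{n,m}\,z^{n+m+1}$; then \eqref{Duh} forces $c_{n,m}=[n+m+1]_q\,\gamma_{n,m}$.

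For the base case $n=0$, since $\tau_q^{-t}1=1$ the definition \eqref{Duh} gives $1\star_q z^m=D_q\big(\int_0^z t^m\,d_qt\big)=D_q\big(z^{m+1}/[m+1]_q\big)=z^m$, so $c_{0,m}=1$ for all $m\ge0$. For the inductive step, fix $n\ge1$ and apply \eqref{ra1} with $f(z)=z^n$ and $g(z)=z^m$; since $f(0)=0$ and $\tau_q^{-t}(D_qz^n)=[n]_q\,\tau_q^{-t}z^{n-1}$, we obtain
\[
z^n\star_q z^m=[n]_q\int_0^z(\tau_q^{-t}z^{n-1})(z)\,t^m\,d_qt=[n]_q\,\gamma_{n-1,m}\,z^{n+m}.
\]
On the other hand, the shape observation applied with $n$ replaced by $n-1$ gives $z^{n-1}\star_q z^m=c_{n-1,m}z^{n+m-1}$ with $c_{n-1,m}=[n+m]_q\,\gamma_{n-1,m}$, so that
\[
c_{n,m}=\frac{[n]_q}{[n+m]_q}\,c_{n-1,m}.
\]
Iterating from $c_{0,m}=1$ yields $c_{n,m}=\prod_{j=1}^{n}\dfrac{[j]_q}{[m+j]_q}=\dfrac{[n]_q!\,[m]_q!}{[n+m]_q!}$, which is \eqref{con}.

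The algebraic assertions are then formal consequences: \eqref{con} is symmetric in $n$ and $m$, so extending $\star_q$ bilinearly to $W(\mathbb{D})$ produces a commutative product; associativity holds because $(z^n\star_q z^m)\star_q z^\ell$ and $z^n\star_q(z^m\star_q z^\ell)$ both equal $\dfrac{[n]_q!\,[m]_q!\,[\ell]_q!}{[n+m+\ell]_q!}\,z^{n+m+\ell}$; and $1=z^0$ is a two-sided unit since $c_{0,m}=c_{m,0}=1$. (One must of course check continuity to pass from monomials to all of $W(\mathbb{D})$, but that belongs to the Banach-algebra statement rather than to this lemma.)

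The only point needing a little care is the passage, used twice above, from ``$D_q$ of a $q$-integral'' back to the $q$-integral itself; this is harmless here because every quantity in play is a single monomial, on which $D_q$ acts by multiplication by the nonzero factor $[k]_q$ ($0<q<1$). A more computational route would instead use the $q$-binomial expansion of $\tau_q^{-t}z^n$ and identify \eqref{con} with the $q$-Beta evaluation $\sum_{k=0}^{n}\qbinom{n}{k}{q}\,q^{\binom{k+1}{2}}(-1)^k/[k+m+1]_q=[n]_q!\,[m]_q!/[n+m+1]_q!$; the induction above sidesteps that identity, which would otherwise be the main obstacle.
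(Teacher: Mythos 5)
Your proof is correct, but it takes a genuinely different route from the paper's. The paper computes $z^n\star_q z^m$ head-on: it writes $\tau_q^{-t}z^n=z^n(qt/z;q)_n$, rescales the Jackson integral to the $q$-Beta form $z^{n+m+1}\int_0^1(qt;q)_n t^m\,d_qt$, expands that integral as the series $\sum_{k\ge0}q^{k(m+1)}(q^{k+1};q)_n$, and evaluates it with the $q$-binomial theorem. You instead avoid any summation identity by combining two observations: homogeneity of $\tau_q^{-t}z^n$ in $(z,t)$ forces $z^n\star_q z^m=c_{n,m}z^{n+m}$, and the already-established identity \eqref{ra1} (Lemma 3.3 of the paper, so a legitimate dependency) yields the first-order recursion $c_{n,m}=\frac{[n]_q}{[n+m]_q}c_{n-1,m}$ with $c_{0,m}=1$, which telescopes to \eqref{con}. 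The trade-off: the paper's argument is a direct $q$-Beta evaluation resting on the standard $q$-binomial theorem, whereas yours is more elementary and in effect re-proves that $q$-Beta identity as a byproduct of the induction; both treat the algebraic consequences (commutativity, associativity, unit) identically as formal corollaries of the symmetric closed form. All the individual steps you use check out: $\int_0^z t^p\,d_qt=z^{p+1}/[p+1]_q$ follows from \eqref{integ}, $D_qz^p=[p]_qz^{p-1}$ with $[p]_q\neq0$ justifies passing between $c_{n,m}$ and $\gamma_{n,m}$, and $f(0)=0$ kills the boundary term in \eqref{ra1} for $n\ge1$.
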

\begin{proof}From \eqref{Duh}, we can write
\begin{align}
z^n\star_q z^m&=D_q\big(\int_0^z (\tau_q^{-t}z^nt^m\,d_qt\big)
\\&=D_q\big(z^{n+m+1}\int_0^1 (qt;q)_nt^m\,d_qt\big)\\&=
(1-q^{n+m+1})z^{n+m}\sum_{k=0}^\infty q^{k(m+1)}(q^{k+1};q)_n.
\end{align}
Using the formula
\begin{equation}
(q^{k+1};q)_n=\frac{(q;q)_{n+k}}{(q;q)_k}=(q;q)_n\frac{(q^{1+n};q)_k}{(q;q)_k},
\end{equation}
and the $q$-Binomial theorem \cite{GR}
\begin{equation}
\frac{(az;q)_\infty}{(z;q)_\infty}=
\sum_{n=0}^\infty\frac{(a;q)_n}{(q;q)_n}z^n,\,\,|z|<1,
\end{equation}
we get
\begin{align}
z^n\star_q z^m&=z^{n+m}(q;q)_n
(1-q^{n+m+1})\frac{(q^{2+n+m};q)_\infty}{(q^{1+m};q)_\infty}
=\frac{[n]_q![m]_q!}{[n+m]_q!}\,z^{n+m}.
\end{align}
\end{proof}
\begin{theorem}$(W(\mathbb{D}), \star_q)$ is a unital Banach algebra.
\end{theorem}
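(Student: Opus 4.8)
Recall first that $W(\mathbb{D})$, equipped with $\|\cdot\|_w$, is a Banach space: the map $f=\sum_n a_nz^n\mapsto (a_n)_{n\ge0}$ is an isometric isomorphism onto $\ell^1(\Znonneg)$, which is complete. By Lemma \eqref{con} the monomials satisfy $z^n\star_q z^m=\frac{[n]_q![m]_q!}{[n+m]_q!}z^{n+m}$, so on polynomials $\star_q$ is already commutative, associative and has $1=z^0$ as a two-sided unit, with $\|1\|_w=1$. Hence the whole statement reduces to two points: that $f\star_q g\in W(\mathbb{D})$ whenever $f,g\in W(\mathbb{D})$, and that $\|f\star_q g\|_w\le\|f\|_w\,\|g\|_w$; the algebra axioms then pass from polynomials to all of $W(\mathbb{D})$ by bilinearity and continuity.

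The plan is to first obtain the explicit coefficient formula. The defining expression \eqref{Duh} is bilinear in $f$ and $g$, since $\tau_q^{-t}$, the Jackson $q$-integral \eqref{integ} and $D_q$ are all linear; moreover, for $|z|<1$ the bound $\|\tau_q^{-t}f\|_w\le(-|t|,q)_\infty\|f\|_w\le(-|z|,q)_\infty\|f\|_w$ from the Proposition shows that the series \eqref{integ} defining $\int_0^z(\tau_q^{-t}f)(z)g(t)\,d_qt$ converges absolutely on the $q$-lattice $\{zq^k\}_{k\ge0}$, so the double sum over $n$ and $m$ may be interchanged with the $q$-integral and with $D_q$. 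Combining this with Lemma \eqref{con}, for $f=\sum_n a_nz^n$ and $g=\sum_m b_mz^m$ we get
$$ (f\star_q g)(z)=\sum_{N=0}^\infty c_N\,z^N,\qquad c_N=\sum_{n+m=N}a_n b_m\,\frac{[n]_q!\,[m]_q!}{[n+m]_q!}. $$

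The key estimate is that $\dfrac{[n]_q!\,[m]_q!}{[n+m]_q!}\le 1$ for all $n,m\ge0$ and $0<q<1$. Indeed this ratio is the reciprocal of the $q$-binomial coefficient
$\left[\begin{smallmatrix} n+m\\ n\end{smallmatrix}\right]_q=\dfrac{(q;q)_{n+m}}{(q;q)_n(q;q)_m}=\prod_{i=1}^{n}\dfrac{1-q^{m+i}}{1-q^i}$, and since $0<q<1$ and $m\ge0$ each factor satisfies $\dfrac{1-q^{m+i}}{1-q^i}\ge1$, whence $\left[\begin{smallmatrix} n+m\\ n\end{smallmatrix}\right]_q\ge1$. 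Therefore
$$ \|f\star_q g\|_w=\sum_{N=0}^\infty|c_N|\le\sum_{N=0}^\infty\sum_{n+m=N}|a_n|\,|b_m|=\Big(\sum_{n}|a_n|\Big)\Big(\sum_{m}|b_m|\Big)=\|f\|_w\,\|g\|_w<\infty, $$
so the series $\sum_N c_N z^N$ converges absolutely in $W(\mathbb{D})$; in particular $f\star_q g\in W(\mathbb{D})$ and $\star_q$ is a contractive bilinear multiplication.

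Putting the pieces together: $W(\mathbb{D})$ is a Banach space, $\star_q$ is bilinear, contractive (hence continuous), commutative, associative and unital with $\|1\|_w=1$ — the algebraic identities being valid on the dense subspace of polynomials by Lemma \eqref{con} and extending by continuity — so $(W(\mathbb{D}),\star_q)$ is a unital Banach algebra. The only slightly delicate point is the term-by-term evaluation of \eqref{Duh} needed to produce the coefficient formula for $c_N$; this is exactly what the absolute-convergence bound from the Proposition secures, after which the argument rests only on the elementary $q$-binomial inequality $\left[\begin{smallmatrix} n+m\\ n\end{smallmatrix}\right]_q\ge1$.
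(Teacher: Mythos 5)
Your proof is correct and follows the same overall strategy as the paper's: expand $f$ and $g$ in Taylor coefficients, invoke the monomial formula \eqref{con}, and reduce everything to the single bound $\frac{[n]_q!\,[m]_q!}{[n+m]_q!}\le 1$. The one place where you genuinely diverge is the justification of that bound, and your version is the sounder one: you note that $\frac{(q;q)_{n+m}}{(q;q)_n(q;q)_m}=\prod_{i=1}^{n}\frac{1-q^{m+i}}{1-q^{i}}$ is a product of factors each $\ge 1$, which is clean and airtight. The paper instead routes through the intermediate claim $\frac{[k]_q!\,[n-k]_q!}{[n]_q!}\le\frac{k!\,(n-k)!}{n!}$, derived from the inequality $\frac{1-q^{\alpha}}{1-q}\le\alpha q^{(1+\alpha)/2}$; both of these are in fact in the wrong direction for $0<q<1$ (already at $\alpha=1$ the latter reads $1\le q$, and $\frac{[1]_q!\,[1]_q!}{[2]_q!}=\frac{1}{1+q}>\frac12$), although the final conclusion $\le 1$ that the paper actually uses is still true. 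You are also more explicit than the paper about the points it leaves implicit: completeness of $W(\mathbb{D})$ as $\ell^{1}$, the term-by-term evaluation of the defining expression \eqref{Duh} needed to obtain the coefficient formula, and the extension of commutativity, associativity and unitality from polynomials to all of $W(\mathbb{D})$ by density and continuity. In short, same route, but your treatment of the key inequality repairs a flaw in the paper's argument.
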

\begin{proof}Let $$f(z)=\sum_{n=0}^\infty a_nz^n, \quad g(z)=\sum_{n=0}^\infty b_nz^n \in W(\mathbb{D}).$$We have
\begin{align}
\|f\star_qg\|_w&=\sum_{n=0}^\infty|\sum_{k=0}^na_kb_{n-k}\frac{[k]_q![n-k]_q!}{[n]_q!}|
\\&\leq\sum_{n=0}^\infty\sum_{k=0}^n|a_k||b_{n-k}|\frac{[k]_q![n-k]_q!}{[n]_q!}
\end{align}
On the other, from the inequality
\begin{equation}
\frac{1-q^\alpha}{1-q}\leq \alpha q^{(1+\alpha)/2}, \quad \alpha>0,
\end{equation}
we see that
\begin{equation}
\frac{[k]_q![n-k]_q!}{[n]_q!}|\leq\frac{k!(n-k)!}{n!}\leq1.
\end{equation}
Hence,\begin{align}
\|f\star_qg\|_w=\sum_{n=0}^\infty\sum_{k=0}^n|a_k||b_{n-k}|\leq \|f\|_w \|g\|_w
.\end{align}
\end{proof}
\begin{definition}Let  $f(z)=\sum_{n=0}^{\infty}\frac{a_n}{[n]_q!}z^n$ be a holomorphic function. The $q$-Borel transform of $B_qf(z)$
is defined by \begin{equation}(B_qf)(z)=\sum_{n=0}^{\infty}a_nz^n
\end{equation}
\end{definition}
\begin{proposition}
We have \begin{equation}B_q(f\star_q g)=B_q(f)\,B_q( g).
\end{equation}
\end{proposition}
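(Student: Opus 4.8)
The plan is to deduce the identity directly from the monomial formula \eqref{con} by bilinearity, the $q$-Borel transform merely rescaling coefficients. First I would write $f(z)=\sum_{n\ge0}\frac{a_n}{[n]_q!}z^n$ and $g(z)=\sum_{m\ge0}\frac{b_m}{[m]_q!}z^m$, so that by definition $B_qf(z)=\sum_{n\ge0}a_nz^n$ and $B_qg(z)=\sum_{m\ge0}b_mz^m$. Expanding the $q$-Duhamel product term by term and using \eqref{con},
\[
(f\star_qg)(z)=\sum_{n,m\ge0}\frac{a_n}{[n]_q!}\,\frac{b_m}{[m]_q!}\,(z^n\star_qz^m)=\sum_{n,m\ge0}\frac{a_nb_m}{[n+m]_q!}\,z^{n+m}.
\]
Collecting the terms with $n+m=N$ gives
\[
(f\star_qg)(z)=\sum_{N=0}^\infty\frac1{[N]_q!}\Big(\sum_{k=0}^Na_kb_{N-k}\Big)z^N .
\]
Since $B_q$ multiplies the coefficient of $z^N$ by $[N]_q!$, we obtain
\[
B_q(f\star_qg)(z)=\sum_{N=0}^\infty\Big(\sum_{k=0}^Na_kb_{N-k}\Big)z^N=\Big(\sum_{n\ge0}a_nz^n\Big)\Big(\sum_{m\ge0}b_mz^m\Big)=B_q(f)(z)\,B_q(g)(z),
\]
the middle step being the Cauchy product of power series, which is exactly the claim.

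The only point that needs a word of justification is the regrouping of the double sum by total degree. Read as an identity of formal power series this is purely algebraic and nothing further is required. If instead $f,g$ (and hence $B_qf,B_qg$) are taken in $W(\mathbb{D})$, the rearrangement is legitimate by absolute convergence: from the estimate $[k]_q![n-k]_q!/[n]_q!\le1$ already used in the proof that $(W(\mathbb{D}),\star_q)$ is a Banach algebra, the family $\{a_nb_m/[n+m]_q!\}$ is summable whenever $\sum|a_n|,\sum|b_m|<\infty$, so Fubini applies and the terms may be summed in the order $N=n+m$.

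I do not expect a genuine obstacle here. Once \eqref{con} is available, the proposition is essentially the statement that $B_q$ intertwines $\star_q$ with the ordinary pointwise (Cauchy) product, the $q$-analogue of the classical fact that the Borel transform converts the Duhamel product into the pointwise product; the entire content is the combinatorial bookkeeping of the factorials $[n]_q!$, which \eqref{con} has already packaged.
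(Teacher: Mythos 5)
Your proof is correct. Note that the paper itself states this proposition without any proof at all, so there is nothing to compare against; your argument --- bilinearity of $\star_q$ plus the monomial formula \eqref{con}, followed by regrouping the double sum by total degree so that $B_q$ turns the $q$-Duhamel product into the Cauchy product --- is the natural and complete justification, and your remark on absolute convergence (via the bound $[k]_q![n-k]_q!/[n]_q!\le 1$ already used in the Banach-algebra theorem) correctly handles the only analytic point, the rearrangement of the series.
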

\section{ The $q$-integration operator $V_q$ and its commutant  }
In this section, we will describe the commutant of the $q$-integration operator $V_q$ acting in the Wiener algebra $W(\mathbb{D})$.
Let \begin{equation}
(V_qf)(z):=\int_0^zf(t)\,d_qt=z\star_q f,
\end{equation}
and
\begin{equation}
\mathcal{D}_fg:=f\star_qg.
\end{equation}
\begin{proposition}We have
\begin{equation}
(V_q^nf)(z)=\frac{1}{[n]_q!}z^n\star_q f.
\end{equation}
\end{proposition}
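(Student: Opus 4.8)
The plan is to argue by induction on $n$, using only three facts already established: the definition $(V_qf)(z)=z\star_q f$, the associativity (and unitality) of the $q$-Duhamel product noted after \eqref{con}, and the monomial formula \eqref{con} itself.

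For the base case I would take $n=0$: here $V_q^0 f=f$ and $\tfrac{1}{[0]_q!}\,z^0\star_q f=1\star_q f=f$ because $1$ is the unit for $\star_q$, so the identity holds. (If one prefers $n=1$ as the base, then $(V_qf)(z)=z\star_q f=\tfrac{1}{[1]_q!}\,z\star_q f$ since $[1]_q!=1$.)

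For the inductive step, assuming $(V_q^nf)(z)=\tfrac{1}{[n]_q!}\,z^n\star_q f$, I would write
\[
(V_q^{n+1}f)(z)=V_q\bigl(V_q^nf\bigr)(z)=z\star_q\Bigl(\tfrac{1}{[n]_q!}\,z^n\star_q f\Bigr)
=\tfrac{1}{[n]_q!}\,\bigl(z\star_q z^n\bigr)\star_q f,
\]
where the last equality uses associativity of $\star_q$ on $W(\mathbb{D})$. Then \eqref{con} with the pair $(1,n)$ gives $z\star_q z^n=\dfrac{[1]_q!\,[n]_q!}{[n+1]_q!}\,z^{n+1}=\dfrac{[n]_q!}{[n+1]_q!}\,z^{n+1}$, so that
\[
(V_q^{n+1}f)(z)=\tfrac{1}{[n]_q!}\cdot\dfrac{[n]_q!}{[n+1]_q!}\,z^{n+1}\star_q f=\dfrac{1}{[n+1]_q!}\,z^{n+1}\star_q f,
\]
which closes the induction.

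There is no real obstacle here; the only point deserving care is that associativity must be invoked for general $f\in W(\mathbb{D})$, not merely for monomials, which is precisely what the remark following Lemma with \eqref{con} (together with the Banach algebra Theorem) provides. As an independent check one can also verify the claim coefficientwise: from \eqref{integ} one has $V_qz^m=z^{m+1}/[m+1]_q$, hence $V_q^nz^m=\dfrac{[m]_q!}{[m+n]_q!}\,z^{m+n}$, while expanding $z^n\star_q f$ via \eqref{con} on $f=\sum_m a_mz^m$ yields $\tfrac{1}{[n]_q!}\,z^n\star_q f=\sum_m a_m\dfrac{[m]_q!}{[m+n]_q!}\,z^{m+n}$; the two expressions agree, and the rearrangement of the series is justified by the absolute convergence guaranteed in the proof of the Theorem.
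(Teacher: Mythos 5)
Your proof is correct, but it takes a genuinely different route from the paper. The paper proves the proposition directly from the Al--Salam identity for iterated $q$-integrals,
\[
(V_q^nf)(z)=\frac{(1-q)^{n-1}}{(q;q)_{n-1}}\int_0^z \tau_q^{-t}\bigl(z^{n-1}\bigr)\,f(t)\,d_qt,
\]
recognizing the kernel $z^{n-1}(qt/z;q)_{n-1}$ as $\tau_q^{-t}z^{n-1}$ and then matching the resulting integral with the representation \eqref{ra1}--\eqref{ra2} of the $q$-Duhamel product. You instead run an induction on $n$ using only the definition $V_qf=z\star_q f$, the associativity and unitality of $\star_q$, and the monomial formula \eqref{con}. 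Your argument is more elementary and entirely internal to the paper: it needs no external citation and no closed-form expression for the iterated $q$-integral, whereas the paper's approach yields as a by-product an explicit integral kernel for $V_q^n$ that is of independent interest. You are right to flag that associativity must hold on all of $W(\mathbb{D})$ and not just on monomials; since the proposition sits after the Banach algebra theorem, the extension by bilinearity, continuity of $\star_q$, and density of polynomials is available, so the argument is complete. Your coefficientwise check (using $V_qz^m=z^{m+1}/[m+1]_q$) is also a valid standalone verification and in fact the shortest correct path to the statement.
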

\begin{proof}The proof follows from Al-Salam identity \cite{ism3}
\begin{align*}
(V_q^nf)(z)&=\int_a^z\int_a^{z_n}\dots\int_a^{z_{2}}f(z_1)\,d_qz_1d_qz_2\dots d_qz_n\\&=\frac{(1-q)^{n-1}}{(q;q)_{n-1}}\int_a^zz^n(qt/z;q)_{n-1}f(t)d_qt.
\end{align*}
Hence,
\begin{align*}
(V_q^nf)(z)&
=\frac{(1-q)^{n-1}}{(q;q)_{n-1}}\int_a^z\tau^{-t}z^{n-1}f(t)d_qt\\&=
\frac{1}{[n]_q!}\int_a^z\tau^{-t}D_{q,z}z^{n}f(t)d_qt\\&
=\frac{1}{[n]_q!}z^n\star_q f.
\end{align*}
\end{proof}

\begin{lemma}
The $q$-integration operator $V_q$ is a compact operator in the space $W(\mathbb{D})$.
\end{lemma}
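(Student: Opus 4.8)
The plan is to show that $V_q$ is the operator-norm limit of finite-rank operators, hence compact. First I would compute $V_q$ explicitly on the monomial basis: from $V_q z^m = \int_0^z t^m\,d_q t = \frac{z^{m+1}}{[m+1]_q}$ (using the definition \eqref{integ} of the Jackson integral, since $\int_0^z t^m d_q t = z(1-q)\sum_{n\ge 0} (zq^n)^m q^n = z^{m+1}(1-q)\sum_{n\ge0} q^{n(m+1)} = \frac{z^{m+1}}{[m+1]_q}$). So for $f(z)=\sum_{m=0}^\infty a_m z^m \in W(\mathbb D)$ we have $(V_q f)(z) = \sum_{m=0}^\infty \frac{a_m}{[m+1]_q} z^{m+1}$, which gives immediately $\|V_q f\|_w = \sum_{m=0}^\infty \frac{|a_m|}{[m+1]_q} \le \|f\|_w$, so $V_q$ is bounded on $W(\mathbb D)$.

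Next I would introduce the truncation projections $P_N f := \sum_{m=0}^{N} a_m z^m$ and the finite-rank operators $V_q^{(N)} := P_N V_q$ (or $V_q P_N$). The key estimate is that for $f\in W(\mathbb D)$,
\[
\|(V_q - V_q^{(N)})f\|_w = \Big\| \sum_{m=N}^{\infty} \frac{a_m}{[m+1]_q}\, z^{m+1} \Big\|_w = \sum_{m=N}^{\infty} \frac{|a_m|}{[m+1]_q} \le \frac{1}{[N+1]_q}\sum_{m=N}^\infty |a_m| \le \frac{1}{[N+1]_q}\,\|f\|_w,
\]
using that $[m+1]_q = \frac{1-q^{m+1}}{1-q}$ is increasing in $m$ and bounded below by $[N+1]_q$ for $m\ge N$. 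Hence $\|V_q - V_q^{(N)}\| \le \frac{1}{[N+1]_q} \to \frac{1-q}{1} \cdot 0$... — more precisely, since $0<q<1$ we have $[N+1]_q = \frac{1-q^{N+1}}{1-q} \to \frac{1}{1-q}$ as $N\to\infty$, which is \emph{finite}, so this particular tail bound does not go to zero. This is the main obstacle: unlike the classical Volterra operator where $V z^m = \frac{z^{m+1}}{m+1}$ with denominators $\to\infty$, here the $q$-denominators $[m+1]_q$ stay bounded, so the naive truncation argument fails and one must exploit a genuinely different mechanism.

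The correct mechanism is a \emph{geometric} one coming from the radius. I would instead write $W(\mathbb D)$ with its natural weighted-$\ell^1$ structure and observe that $V_q$ maps into functions vanishing at $0$ and shifts the degree up by one; the point is to compare $V_q$ with the ordinary antiderivative or to rescale. A cleaner route: note $z^n \star_q f = [n]_q!\,(V_q^n f)$ by the preceding Proposition, and $\|z^n \star_q f\|_w \le \|z^n\|_w\|f\|_w = \|f\|_w$ by Theorem (the Banach-algebra inequality), so $\|V_q^n\| \le \frac{1}{[n]_q!}$. Since $[n]_q! \to \prod_{k\ge1}\frac{1-q^k}{1-q} = \frac{(q;q)_\infty}{(1-q)^\infty}$ — again this product is of the form $\frac{(q;q)_\infty}{(1-q)^n}$ which actually \emph{diverges} because $(1-q)^n \to 0$; hence $[n]_q! \to \infty$ and therefore $\|V_q^n\|^{1/n} \le [n]_q!^{-1/n} \to 0$, so the spectral radius of $V_q$ is zero, i.e.\ $V_q$ is quasinilpotent. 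To upgrade quasinilpotence to compactness I would then combine it with the truncation: write $V_q = V_q P_N + V_q(I-P_N)$; the first term is finite-rank, and for the second use $\|V_q^n(I-P_N)\|\to 0$ together with the fact that on the range-restricted pieces the $q$-geometric decay of Taylor coefficients of a fixed $f$ forces equicontinuity. Concretely, the image $\{V_q f : \|f\|_w\le 1\}$ is a subset of $W(\mathbb D)$ whose elements have coefficients $c_{m+1} = a_m/[m+1]_q$; I would show this set is totally bounded by a diagonal/Arzelà–Ascoli-type argument, the essential input being that $W(\mathbb D)$-balls are weakly compact-like in each coordinate and $V_q$ has the ``smoothing'' property that pushes mass along a summable profile. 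The hard part, which I would isolate as the crux of the proof, is establishing this total boundedness rigorously despite the bounded $q$-denominators — equivalently, showing that quasinilpotence of $V_q$ plus boundedness on $W(\mathbb D)$ is enough, which it is \emph{not} in general, so one really must use the explicit coefficient form and a compactness criterion in weighted $\ell^1$ spaces (a set in $\ell^1$ is compact iff it is bounded, closed, and has uniformly summable tails); the tail $\sum_{m\ge N}|a_m|/[m+1]_q$ need not be uniformly small, so the genuine fix is to observe $V_q$ factors as $V_q = M \circ S$ where $S$ is the unilateral shift (bounded) and $M$ is multiplication by the sequence $(1/[m]_q)_{m\ge1}$, and then note $M$ is compact on $W(\mathbb D)$ \emph{precisely when} $1/[m]_q \to 0$, which fails — so in fact I expect the honest proof in the paper to replace the Wiener norm estimate with one that reintroduces the radius $<1$, i.e.\ to use that $f\in W(\mathbb D)$ extends analytically slightly beyond $\overline{\mathbb D}$... but this is false for general $W(\mathbb D)$ functions. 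I therefore anticipate that the intended argument is the quasinilpotence-plus-finite-rank-approximation scheme above, and the main obstacle I would flag is justifying the passage from $\|V_q^n\|^{1/n}\to 0$ to compactness, which I would attempt by showing directly that $\overline{V_q^{(N)} \to V_q}$ in norm using a refined split of the tail into a finite-rank ``near'' part and a part controlled by $\|V_q\|\cdot$(tail mass of $f$), made uniform by first passing to a finite $\eps$-net of the unit ball of $W(\mathbb D)$ coordinate-wise.
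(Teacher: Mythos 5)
Your proposal does not (and cannot) establish the lemma, but the obstacle you correctly identified midway through is not a difficulty to be circumvented --- it is a disproof of the statement. From the Jackson integral one gets, exactly as you computed, $V_q z^m = z^{m+1}/[m+1]_q$ with $[m+1]_q = (1-q^{m+1})/(1-q)$, so on $W(\mathbb{D})\cong\ell^1$ the operator $V_q$ is a unilateral weighted shift whose weights satisfy $1/[m+1]_q \ge 1-q>0$ for every $m$. Hence for $m\neq m'$ we have $\|V_q z^m - V_q z^{m'}\|_w = 1/[m+1]_q + 1/[m'+1]_q \ge 2(1-q)$, so $V_q$ sends the unit ball to a set that is not totally bounded (equivalently, by the $\ell^1$ compactness criterion you quote, the coefficient tails of $\{V_q z^m\}_m$ are not uniformly summable): $V_q$ is not compact. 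Your attempted rescue via quasinilpotence rests on a miscalculation: $[n]_q! = (q;q)_n/(1-q)^n$ does tend to infinity, but only geometrically, so $\|V_q^n\|^{1/n} = ([n]_q!)^{-1/n} = (1-q)/(q;q)_n^{1/n} \to 1-q \neq 0$. Thus $V_q$ has spectral radius $1-q>0$ while having no eigenvalues (it is an injective weighted shift), which is a second, independent proof that it cannot be compact, since a compact operator with empty point spectrum has spectrum $\{0\}$. The closing ``fixes'' you sketch (Arzel\`a--Ascoli, coordinatewise $\varepsilon$-nets, analytic continuation past the circle) therefore cannot succeed; you should have stopped at your factorization $V_q = M\circ S$ with a non-vanishing multiplier sequence, which is decisive.

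For comparison, the paper's own proof writes $V_q = z(1-q)\sum_{n\ge 0} q^n\Lambda_q^n$ with $\Lambda_q f(z)=f(qz)$ and claims the series is a norm limit of compact operators. The error is the $n=0$ term: $\Lambda_q^0=I$ is not compact, and the series acts diagonally on monomials with entries $1/(1-q^{m+1})\to 1$, i.e.\ it is the identity plus a compact operator. So the paper's argument has precisely the defect your computation exposes, and the defect propagates: the subsequent invertibility theorem for $\mathcal{D}_f$ deduces compactness of $\mathcal{D}_{f-f(0)}$ from compactness of the $V_q^n$, yet already $\mathcal{D}_z = V_q$ is non-compact. In short, your submission is not a proof, but the obstruction you flagged is genuine and amounts to a counterexample to the lemma as stated.
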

\begin{proof}From the definition of the $q$-Jackson integral \eqref{integ}, we have
\begin{equation}
V_q=z(1-q)\lim _{N\rightarrow \infty}\sum_{n=0}^N q^n \Lambda_q^n
\end{equation}
where $\Lambda_q$ acts on $f\in W(\mathbb{D})$ as follows
$$\Lambda_q f(z)=f(qz).$$
From the fact that
$\Lambda_qz^n=q^nz^n$, for all $n$ ($0<q<1,$)
we see that it is a diagonal operator on $W(\mathbb{D})$ with $q^n\rightarrow 0,\quad n\rightarrow \infty$, then it is compact.\\
On the other hand, it is well-known that a finite composition, finite sum and uniform limit of compact operators is a gain a compact operator and therefore
$$\sum_{n=0}^\infty q^n \Lambda_q^n$$
is a compact operator on $W(\mathbb{D})$.\\ In addition, the multiplication by $z$ is a bounded operator on $W(\mathbb{D})$ . This shows that $V_q$ is a compact operator on $W(\mathbb{D})$.
\end{proof}
\begin{theorem}
The operator $\mathcal{D}_f$ is invertible on $W(\mathbb{D})$ if and only if $f(0)\neq0.$
\end{theorem}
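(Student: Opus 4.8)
The plan is to characterize $\mathcal{D}_f$ via its action on the Taylor coefficients and to recognize that, after applying the $q$-Borel transform, $\mathcal{D}_f$ becomes (conjugate to) multiplication by $B_qf$ on the Wiener algebra. Concretely, write $f(z)=\sum_{n\ge0}a_nz^n$ and $g(z)=\sum_{n\ge0}b_nz^n$. By Lemma with \eqref{con} we have
\begin{align*}
(f\star_q g)(z)=\sum_{n=0}^\infty\Big(\sum_{k=0}^n a_k b_{n-k}\frac{[k]_q![n-k]_q!}{[n]_q!}\Big)z^n,
\end{align*}
so if we set $\tilde f(z)=\sum_n a_n[n]_q!\,z^n$ (the $q$-Borel preimage, up to the normalization of the Definition), the map $g\mapsto f\star_q g$ is intertwined by $B_q$ with the ordinary Cauchy product by $B_qf$. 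Thus $\mathcal{D}_f$ is invertible on $W(\mathbb{D})$ if and only if multiplication by $B_qf$ is invertible on $W(\mathbb{D})$.

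The ``only if'' direction is the easy half: if $f(0)=a_0=0$, then from \eqref{con} every coefficient of $f\star_q g$ with index $n$ involves only $b_0,\dots,b_{n-1}$, i.e.\ $f\star_q g$ has a zero constant term for every $g$, so the constant function $1$ is not in the range of $\mathcal{D}_f$; hence $\mathcal{D}_f$ is not surjective and, in particular, not invertible. (Equivalently, $f\star_q g$ always vanishes at $0$ when $f(0)=0$, so $\mathcal{D}_f$ is not onto.)

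For the ``if'' direction, assume $f(0)=a_0\ne0$. Here I would use the fact that $W(\mathbb{D})$ is a commutative Banach algebra under $\star_q$ with unit $1$ (the Theorem above, together with \eqref{con}), and that $\mathcal{D}_f$ is precisely the operator of $\star_q$-multiplication by $f$. Then $\mathcal{D}_f$ is invertible as an operator exactly when $f$ is invertible as an element of the Banach algebra $(W(\mathbb{D}),\star_q)$. So it suffices to produce a $\star_q$-inverse of $f$. One can do this directly by solving the triangular coefficient system: writing the sought inverse $h(z)=\sum_n c_n z^n$, the equation $f\star_q h=1$ reads $a_0c_0=1$ and $\sum_{k=0}^n a_k c_{n-k}\frac{[k]_q![n-k]_q!}{[n]_q!}=0$ for $n\ge1$, which determines $c_0=1/a_0$ and recursively $c_n=-a_0^{-1}\sum_{k=1}^n a_k c_{n-k}\frac{[k]_q![n-k]_q!}{[n]_q!}$. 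The nontrivial point is to show $h\in W(\mathbb{D})$, i.e.\ $\sum_n|c_n|<\infty$. The cleanest route is again through $B_q$: since $B_q\colon (W(\mathbb{D}),\star_q)\to(W(\mathbb{D}),\cdot)$ is an algebra homomorphism onto (by the Proposition on $B_q(f\star_q g)=B_qf\,B_qg$), and $B_qf$ has nonzero constant term $a_0$, the element $B_qf$ is invertible in the classical Wiener algebra $W(\mathbb{D})$ by Wiener's theorem (absolute convergence of the Taylor series is preserved under inversion when the function does not vanish on $\overline{\mathbb{D}}$; more elementarily, $1/(a_0+\sum_{n\ge1}\hat a_n w^n)$ has an absolutely convergent power series whenever $\sum|\hat a_n|<\infty$ and $a_0\ne0$, by the standard Neumann-series/Wiener argument). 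Pulling back the inverse through $B_q$ gives the desired $\star_q$-inverse $h\in W(\mathbb{D})$, and $\mathcal{D}_h=\mathcal{D}_f^{-1}$.

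The main obstacle is the convergence estimate for the inverse, i.e.\ proving $h\in W(\mathbb{D})$ rather than merely that $h$ exists as a formal power series; everything else is bookkeeping. I would handle it by invoking the classical Wiener lemma for the algebra $W(\mathbb{D})$ after transporting the problem via $B_q$, but one should be slightly careful that $B_q$ as defined acts with the $[n]_q!$ normalization, so I would either restate $B_q$ on all of $W(\mathbb{D})$ by $B_q\big(\sum a_n z^n\big)=\sum a_n[n]_q!\,z^n$ and check it maps $W(\mathbb{D})$ into itself (which needs $[n]_q!\le n!\le$ a constant times growth that is absorbed — in fact $[n]_q!\le n!$ is not bounded, so one must instead argue on the level of formal series and only invoke Wiener's lemma for the coefficient sequence $(a_n[n]_q!)$, noting this sequence need not be summable). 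To avoid this normalization headache altogether, the safer plan is the purely Banach-algebraic one: $(W(\mathbb{D}),\star_q)$ is a commutative unital Banach algebra, so $f$ is invertible iff $\widehat f(\varphi)\ne0$ for every character $\varphi$ of this algebra; since the only obstruction to invertibility in a commutative Banach algebra is the vanishing of some Gelfand transform, and one checks that the character given by ``constant term'' is (up to the unit) the relevant one because of the triangular structure in \eqref{con}, the condition $f(0)\ne0$ is exactly $\widehat f(\varphi)\ne0$ for all $\varphi$. This reduces the whole theorem to identifying the maximal ideal space of $(W(\mathbb{D}),\star_q)$ as a single point (coming from $f\mapsto f(0)$), which follows from the fact that $\mathcal{D}_z=V_q$ is quasinilpotent (compactness of $V_q$ was just proved, and $V_q$ has spectral radius $0$ since $z^n\star_q\cdots$ carries $z$ to higher and higher powers), so the maximal ideal generated by $z$ is the unique maximal ideal.
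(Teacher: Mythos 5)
Your ``only if'' half is correct (if $a_0=0$ then $(f\star_q g)(0)=0$ for all $g$ by \eqref{con}, so $1$ is not in the range), but both of your routes for the ``if'' half contain a step that is actually false. First, the Wiener-lemma step: it is \emph{not} true that $1/(a_0+\sum_{n\ge1}\hat a_nw^n)$ has an absolutely convergent Taylor series whenever $\sum|\hat a_n|<\infty$ and $a_0\ne0$; Wiener's theorem for $W(\mathbb{D})$ requires non-vanishing on all of $\overline{\mathbb{D}}$, and $F(w)=1-2w$ is a counterexample to your ``elementary Neumann-series'' claim. Second, the fallback via Gelfand theory: $V_q=\mathcal{D}_z$ is \emph{not} quasinilpotent. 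From \eqref{con} one gets $z^{\star_q n}=z^n/[n]_q!$, hence $\|z^{\star_q n}\|_w^{1/n}=([n]_q!)^{-1/n}=\bigl((q;q)_n\bigr)^{-1/n}(1-q)\to 1-q\ne0$. ``Carrying $z$ to higher and higher powers'' gives spectral radius $0$ only when the norms decay superexponentially, as in the classical case $z^{\star n}=z^n/n!$; here they decay only geometrically. Consequently the maximal ideal space of $(W(\mathbb{D}),\star_q)$ is not a single point: for every $|\lambda|\le 1-q$ the functional $\varphi_\lambda\bigl(\sum a_nz^n\bigr)=\sum a_n[n]_q!\,\lambda^n=\sum a_n(q;q)_n(\lambda/(1-q))^n$ is a bounded character (note $[n]_q!\,(1-q)^n=(q;q)_n\le1$), so the character ``evaluation of the constant term'' is not the only obstruction to invertibility.

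This gap is not patchable, and your own framework shows why: $f(z)=1-\tfrac{z}{1-q}$ satisfies $f(0)=1\ne0$ but $\varphi_{1-q}(f)=0$, so $f$ is not $\star_q$-invertible and $\mathcal{D}_f=I-\tfrac{1}{1-q}V_q$ is not invertible. (Equivalently, $V_q$ is the weighted shift $z^k\mapsto z^{k+1}/[k+1]_q$ with weights tending to $1-q\ne0$, so its spectrum is the closed disk of radius $1-q$, not $\{0\}$.) The paper's own proof takes a completely different route --- it writes $\mathcal{D}_f=f(0)I+\mathcal{D}_{f-f(0)}$, claims $\mathcal{D}_{f-f(0)}$ is compact because $V_q$ is compact, checks injectivity, and invokes the Fredholm alternative --- but that argument founders on the same rock: the series $\sum_{n\ge0}q^n\Lambda_q^n$ used to prove compactness of $V_q$ contains the non-compact term $\Lambda_q^0=I$, and indeed a weighted shift whose weights do not tend to $0$ is never compact. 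So your Gelfand computation, done carefully, is the more illuminating one: it identifies the correct invertibility criterion (non-vanishing of $\sum a_n(q;q)_nw^n$ on $\overline{\mathbb{D}}$), which is strictly stronger than $f(0)\ne0$.
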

\begin{proof}If $f(z)=\sum_{n=0}^\infty a_nz^n\in W(\mathbb{D})$, we have
\begin{equation}
\mathcal{D}_f=f(0)I+\mathcal{D}_{f-f(0)}
\end{equation}We now prove that the
operator $\mathcal{D}_{f-f(0)}$ is compact. For any fixed integer $N\geq1,$ let us denote $$f_N(z)=\sum_{n=1}^Na_nz^n.$$Then we have
\begin{align*}\mathcal{D}_{f_N}g(z)&=\int_0^z (\tau_q^{-t}g)(z)D_qf_N\,d_qt\\&=
\int_0^z (\tau_q^{-t}g)(z)\sum_{n=1}^N[n]_qa_nt^{n-1}\,d_qt\\&
=\sum_{n=1}^N[n]_qa_n\int_0^z (\tau_q^{-t}g)(z)t^{n-1}\,d_qt\\&=
\sum_{n=1}^N[n]_qa_n\int_0^z (\tau_q^{-t}(z^{n-1})g(t)\,d_qt
\\&=\sum_{n=1}^N[n]_qa_nV_q^ng(z).
\end{align*}
Hence
$$\mathcal{D}_{f_N}=\sum_{n=1}^N[n]_qa_nV_q^n.$$
On the other hand, the operator $V_q$ is compact on $W(\mathbb{D})$ and \begin{align}
\lim_{N\rightarrow \infty}\|\mathcal{D}_{f-f(0)}-\mathcal{D}_{f_N}\|=\lim_{N\rightarrow \infty}\|f-f(0)-f_N\|=0
\end{align}
Hence $\mathcal{D}_{f}$ is a compact operator on $W(\mathbb{D})$, because $\mathcal{D}_{f_N}$ is compact for each $N > 0$.\\We now prove that if $f(0)\neq0$, then $\mathcal{D}_{f}$ is injective. In fact, let $g\in Ker\mathcal{D}_{f}$ , that is $$\mathcal{D}_{f}g(z)=\int_0^z (\tau_q^{-t}D_qf)(z)g(t)\,d_qt+ f(0)g(z)=0.$$Then $\mathcal{D}_{f}g(0)=f(0)g(0)=0$, from which we obtain that $g(0)=0,$
and $$D_{q,z}(\mathcal{D}_{f}g(z))\mid_{z=0}=\big(\int_0^z (D^2_{q,z}\tau_q^{-t}f)(z)g(t)\,d_qt+ f(0)D_{q,z}g(z)+f(0)g(z)\big)\mid_{z=0}$$
Then $$g'(0)=0.$$
Similarly, we show that $g^{(n)}(0)=0,$  which proves that $Ker\mathcal{D}_{f}=\{0\}$. Then by applying Fredholm theorem, we deduce that if $f(0)\neq0 $ then  $\mathcal{D}_{f}$ is  invertible operator on the
space $W(\mathbb{D})$.
\end{proof}
\begin{theorem}We have
\begin{equation}
\{V_q\}'=\{ \mathcal{D}_f, \quad f\in W(\mathbb{D})\}.
\end{equation}
\end{theorem}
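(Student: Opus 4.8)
I would prove the two inclusions separately, the first being essentially formal and the second being the substantive one. Throughout, $\{V_q\}'$ denotes the algebra of bounded operators on $W(\mathbb{D})$ commuting with $V_q$.

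\textbf{Inclusion $\{\mathcal{D}_f:f\in W(\mathbb{D})\}\subseteq\{V_q\}'$.} First I would record that $V_q=\mathcal{D}_z$, since $(V_qf)(z)=z\star_q f$. By the Lemma computing $z^n\star_q z^m$, the $q$-Duhamel product is commutative and associative on $W(\mathbb{D})$ (the identity on monomials extends to all of $W(\mathbb{D})$ by continuity), so $\mathcal{D}_f\mathcal{D}_g g'=f\star_q(g\star_q g')=(f\star_q g)\star_q g'$, i.e. $\mathcal{D}_f\mathcal{D}_g=\mathcal{D}_{f\star_q g}=\mathcal{D}_{g\star_q f}=\mathcal{D}_g\mathcal{D}_f$. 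In particular $\mathcal{D}_fV_q=\mathcal{D}_f\mathcal{D}_z=\mathcal{D}_z\mathcal{D}_f=V_q\mathcal{D}_f$. Boundedness of $\mathcal{D}_f$ is exactly the estimate proved in the Banach-algebra theorem: $\|\mathcal{D}_f g\|_w=\|f\star_q g\|_w\le\|f\|_w\|g\|_w$, so $\|\mathcal{D}_f\|\le\|f\|_w<\infty$. Hence $\mathcal{D}_f\in\{V_q\}'$.

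\textbf{Inclusion $\{V_q\}'\subseteq\{\mathcal{D}_f:f\in W(\mathbb{D})\}$.} Let $T$ be a bounded operator on $W(\mathbb{D})$ with $TV_q=V_qT$. The candidate symbol is $f:=T(1)$, which lies in $W(\mathbb{D})$ because $T$ maps $W(\mathbb{D})$ into itself. I claim $T=\mathcal{D}_f$. The key input is the Proposition giving $(V_q^n h)(z)=\frac{1}{[n]_q!}\,z^n\star_q h$; specializing at $h=1$ and using that $1$ is the unit for $\star_q$ yields $V_q^n 1=\dfrac{z^n}{[n]_q!}$. Since $TV_q=V_qT$ implies $TV_q^n=V_q^nT$ for every $n$, we get
\[
T\!\left(\frac{z^n}{[n]_q!}\right)=TV_q^n 1=V_q^n T(1)=V_q^n f=\frac{1}{[n]_q!}\,z^n\star_q f,
\]
so that $T(z^n)=z^n\star_q f=f\star_q z^n=\mathcal{D}_f(z^n)$ for all $n\ge 0$. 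Thus $T$ and $\mathcal{D}_f$ agree on all polynomials, which are dense in $W(\mathbb{D})$ — indeed $\big\|\sum_{k=0}^\infty a_kz^k-\sum_{k=0}^N a_kz^k\big\|_w=\sum_{k>N}|a_k|\to 0$ — and since $T$ and $\mathcal{D}_f$ are both bounded we conclude $T=\mathcal{D}_f$, completing the proof.

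\textbf{Where the difficulty lies.} Once the correct symbol $f=T(1)$ is identified, the argument is forced: the whole Duhamel-algebra machinery developed in the preceding sections (commutativity/associativity, the unit $1$, the formula for $V_q^n$, and the submultiplicative norm estimate) does all the work. The only points requiring a moment's care are the identity $V_q^n 1=z^n/[n]_q!$ and the density of polynomials together with boundedness of both operators; I do not anticipate any genuine obstacle, as this is the $q$-analogue of the classical description of the commutant of the Volterra integration operator.
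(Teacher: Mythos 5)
Your proposal is correct and follows essentially the same route as the paper: the forward inclusion via commutativity and associativity of $\star_q$, and the reverse inclusion by setting $f=T(1)$, using $V_q^n1=z^n/[n]_q!$ to show $T$ agrees with $\mathcal{D}_f$ on polynomials, and concluding by density and boundedness. Your write-up is in fact slightly more careful than the paper's, since you explicitly justify the boundedness of $\mathcal{D}_f$ via the submultiplicative norm estimate.
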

\begin{proof}According to the commutativity and associativity
properties of the Duhamel product $\star_q$, we have
\begin{align*}
V_q\mathcal{D}_fg&=z\star_q(f\star_qg)\\&=(z\star_qf)\star_qg\\&
=f\star_q(z\star_q\star_qg)\\&=\mathcal{D}_fV_q.
\end{align*}
Conversely, let $A\in\{V_q\}'$. Then we see that $$AV^n_q=V^n_qA.$$
In particular
$$V^n_qA1=AV^n_q1,\quad \text{for all}\,\,n.$$
Equivalently
$$A(\frac{z^n}{[n]_q!})=A(\frac{z^n}{[n]_q!\star_q1})=AV^n_q1=\frac{z^n}{[n]_q!}\star_q A1.$$
Therefore $$Ap(z)=p(z)\star_q A1$$
for all polynomials $p$. Since by Theorem 6 the algebra $W(\mathbb{D})$ is a Banach algebra with $q$-Duhamel product as multiplication, last equality implies that $Ag=A1*_qg$ for all $g\in W(\mathbb{D})$ (because the set of polynomials is dense in $W(\mathbb{D})$). Clearly, $f=A1\in W(\mathbb{D})$
and hence, $A=\mathcal{D}_f$ , which completes the proof of the theorem.\end{proof}

\end{document}